\theoremstyle{plain}
\newtheorem{prop}{Proposition}
\newtheorem*{prop*}{Proposition}
\newtheorem*{thm*}{Theorem}
\newtheorem{cor}[prop]{Corollary}
\newtheorem*{convention*}{Convention}
\theoremstyle{definition}
\newtheorem*{defn*}{Definition}
\newtheorem{defn}[prop]{Definition}
\newtheorem{rem}[prop]{Remark}
\newtheorem*{scholium*}{Scholium}
\newtheorem{example}[prop]{Example}
\newtheorem*{example*}{Example}
\numberwithin{equation}{section}
\newcommand{\fhi}{\varphi}
\newcommand{\sA}{\mathscr{A}}
\newcommand{\sBI}{\mathscr{B_\mathscr{I}}}
\newcommand{\sC}{\mathscr{C}}
\newcommand{\sI}{\mathscr{I}}
\newcommand{\sL}{\mathscr{L}}
\newcommand{\ptens}{\mathrel{\hat\otimes}}
\newcommand{\itens}{\mathrel{\check\otimes}}
\newcommand{\norms}[1]{\left \lVert #1 \right \rVert}
\newcommand{\bid}{^{**}}
\newcommand{\uc}[2]{\mathrm{uc}_{#1}[#2]}
\newcommand{\tc}{{\upshape\textrm{C}}\xspace}
\newcommand{\tm}{{\upshape\textrm{M}}\xspace}
\newcommand{\hb}{\mathrm{H}_\mathrm{b}}
\newcommand{\inv}{^{-1}}
\newcommand{\se}{\subseteq}
\newcommand{\lra}{\longrightarrow}
\begin{document}
\title{A note on topological amenability}
\author[Nicolas Monod]{Nicolas Monod$^\ddagger$}
\address{EPFL, Switzerland}
\thanks{$^\ddagger$Supported in part by the Swiss National Science Foundation}
%
\begin{abstract}
We point out a simple characterisation of topological amenability in terms of bounded cohomology, following Johnson's reformulation of amenability.
\end{abstract}
\maketitle

\section{Introduction}
\subsection{Motivation}
According to Johnson--Ringrose~\cite[2.5]{Johnson}, a locally compact group $G$ is amenable if and only if the (continuous) bounded cohomology $\hb^n(G, E^*)$ vanishes for all $n\geq 1$ and all Banach $G$-modules $E$. In Johnson's terminology, $\hb^n(G,-)$ is the cohomology of the Banach algebra $\ell^1 G$. The importance of this reformulation is that it led to Johnson's definition of amenable algebras. Here $E^*$ denotes the dual module; passing to duals is consubstantial with the nature of amenability as a limiting property, and indeed the corresponding condition on \emph{all} modules simply characterises compact groups instead.

\medskip

This purpose of this note is to record the corresponding characterisation of \emph{amenable actions} in the topological sense; one motivation is the particular case of \emph{exactness} of discrete groups. Definitions will be recalled in the text as needed.

\medskip
This note grew out of my reading of~\cite{Douglas-NowakVAN} where a partial result for the case of exactness is obtained by another approach. I have been informed by Brodzki, Niblo, Nowak and Wright that they have previously obtained a characterisation of amenable actions. After completing this note I received their preprint containing similar ideas; the reader is encouraged to consult that article as well.

\subsection{Topological amenability}
Let $G$ be a group acting on the compact space $X$. We always assume that $X$ is Hausdorff and the action is by homeomorphisms. There is no topology on $G$.

\medskip

What is desired is a cohomological characterisation of the amenability of this action. It should extend the Johnson--Ringrose criterion for the amenability of (discrete) groups. We propose therefore to consider the vanishing of $\hb^\bullet(G, E^*)$ for a class of Banach $G$-modules $E$ determined by $X$. The condition imposed by $X$ should in be void when $X$ is a point, and be functorial. Before going any further, there is an obvious guess: all modules of the form $E=C(X, V)$ where $V$ is any Banach $G$-module. It turns out that there is indeed an easy criterion along those lines:

\medskip

\indent\indent\indent\begin{minipage}{0.8\linewidth}
\itshape
The $G$-action on $X$ is topologically amenable if an only if $\hb^n(G, C(X, V)\bid)$ vanishes for every Banach $G$-module $V$ and every $n\geq 1$.
\end{minipage}

\medskip
\noindent
An aesthetic drawback is that we passed here to the bidual $E\bid$, whilst one expects a criterion involving all duals (this affect also functoriality). We shall therefore move beyond this obvious first guess and return to general Banach $G$-modules $E$. The influence of $X$ will simply be through the data of a $C(X)$-module structure, which should of course be compatible with the $G$-action. Let us therefore call $E$ a \emph{$(G,X)$-module} if it is both a Banach $G$- and $C(X)$-module, such that $g\fhi g\inv$ corresponds to the action of $g\in G$ on $\fhi\in C(X)$. This algebraic condition is too general to capture the analytic situation (and Example~\ref{ex:GX} illustrates that it \emph{cannot} characterise amenability); this reflects the fact that not all $(G,X)$-modules are modules for the underlying crossed product algebra introduced below. Analysing purely the $C(X)$-structure alone, we can however isolate a relevant concept inspired by Kakutani's classical work~\cite{Kakutani41a,Kakutani41b}:

\begin{defn}
We say that a $C(X)$-module $E$ is \emph{of type}~\tm if for all $u\in E$ and $\fhi_i \in C(X)$ with $\fhi_i\geq 0$ one has
$$\| \fhi_1 u\| + \cdots + \| \fhi_n u\| \ \leq\ \| \fhi_1 + \cdots + \fhi_n \| \cdot \|u\|. \leqno{\text{(M)}}$$
We say that $E$ is \emph{of type}~\tc if for all $u_i \in E$ and $\fhi_i\geq 0$ one has
$$\norms{\fhi_1 u_1 + \cdots + \fhi_n u_n} \ \leq\ \| \fhi_1 + \cdots + \fhi_n \| \cdot \max_i\norms{u_i}. \leqno{\text{(C)}}$$
\end{defn}

\noindent
(These notions are mutually dual, yet both types include modules that are not dual. The fundamental examples motivating the terminology are modules of measures, respectively of continuous functions.)

\smallskip

We now have the criterion that we sought:

\begin{prop*}[Concise version]
Let $G$ be a group acting on the compact space $X$. The following are equivalent.
\begin{enumerate}
\item The $G$-action on $X$ is topologically amenable.
\item $\hb^1(G, E^*)=0$ for every $(G,X)$-module $E$ of type~\tm.
\item $\hb^n(G, E^*)=0$ for every $(G,X)$-module $E$ of type~\tm and every $n\geq 1$.
\end{enumerate}
\end{prop*}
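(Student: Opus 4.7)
Since (iii) $\Rightarrow$ (ii) is immediate, I will focus on (i) $\Rightarrow$ (iii) and (ii) $\Rightarrow$ (i).

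For (i) $\Rightarrow$ (iii), I would start with the net of weak-$*$ continuous maps $\mu_\alpha\colon X\to \mathrm{Prob}(G)$, satisfying $\sup_{x\in X}\|g_*\mu_\alpha(x) - \mu_\alpha(gx)\|_1\to 0$ for every $g\in G$, provided by topological amenability. The plan is to use the $\mu_\alpha$ to build an asymptotic contracting homotopy on the bar complex $\ell^\infty(G^{\bullet+1},E^*)$ of any $(G,X)$-module $E$ of type~\tm. Since $E^*$ inherits a $C(X)$-module structure, one can pair an $E^*$-valued cochain with the finitely supported $\mu_\alpha(x)\in\ell^1 G$ pointwise in $x$, producing a candidate primitive $h_\alpha c$ of any bounded cocycle $c$ by averaging its last coordinate. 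Condition~(M) will be precisely what makes this averaging norm-contractive---the total mass of $\mu_\alpha(x)$ being $1$---while almost invariance forces $d h_\alpha c - c$ to tend to zero in norm. A weak-$*$ ultrafilter limit of the $h_\alpha c$ then yields a genuine primitive, giving vanishing of $\hb^n(G,E^*)$ in every degree.

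For (ii) $\Rightarrow$ (i), the plan is to exhibit a specific $(G,X)$-module $E$ of type~\tm whose first bounded cohomology detects topological amenability. A natural candidate would be $E = \ell^1 G\ptens M(X)$, with diagonal $G$-action (translation on $\ell^1 G$ and pushforward on $M(X)$) and $C(X)$ acting on the second factor by pointwise multiplication; verification of~(M) will reduce to the elementary estimate $\sum_i\int\fhi_i\d|\nu|\leq \|\sum_i\fhi_i\|_\infty\|\nu\|$ for positive $\fhi_i\in C(X)$ and $\nu\in M(X)$. Using an augmentation-type map $E\to C(X)$ given by the total mass of the measure factor, one obtains a natural $1$-cocycle in $E^*$ whose vanishing should translate, via a Day-style convexification inside $E$, into the existence of the desired net $X\to\mathrm{Prob}(G)$ witnessing topological amenability.

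The principal obstacle will be in (ii) $\Rightarrow$ (i): one must ensure that the abstract cohomological splitting, initially an element of $E\bid$, can be realised by a net of weak-$*$ \emph{continuous} fields of probability measures on $G$ parametrised by points of $X$. The type~\tm structure---essentially, that $E$ behaves like a bundle of measures over $X$---is exactly what will allow the algebraic invariance to propagate pointwise while preserving continuity in $x$, in a fibred analogue of Day's classical trick. The higher-degree part of (i) $\Rightarrow$ (iii) will then be a formal check that the contracting homotopy commutes with the coboundary operator at each level of the bar complex.
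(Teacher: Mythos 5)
Your direction (i)$\Rightarrow$(iii) is in substance the paper's argument: averaging $\alpha(\sigma(g\inv b))$ (or, in your bar-complex packaging, the cochain's last variable) against $\mu_\alpha$ through the $C(X)$-module structure on $E^*$, with the contraction coming from condition~(\tc) on $E^*$ (equivalently~(\tm) on $E$, as you say) and the total mass being $1_X$; the paper phrases this as relative injectivity of dual type~\tc\ modules rather than as a homotopy, but the estimate is identical. One imprecision: $dh_\alpha c-c$ does \emph{not} tend to zero in norm, since the sup over all tuples involves $\norms{g\mu_\alpha-\mu_\alpha}$ for \emph{all} $g\in G$ simultaneously; the correct order of operations is to pass to the weak-$*$ limit first and verify $dhc=c$ pointwise on tuples, where only finitely many group elements intervene. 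This is exactly how the paper handles equivariance of the limit $\beta$.

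The genuine gap is in (ii)$\Rightarrow$(i). Your test module $E=\ell^1 G\ptens M(X)\cong\ell^1(G,M(X))$ is of type~\tm, but the ``augmentation $E\to C(X)$ by total mass of the measure factor'' does not exist as described (total mass lands in $\RR$, giving maps to $\ell^1 G$ or to $M(X)$, not to $C(X)$), and, more fundamentally, any Day-style convexification inside this $E$ produces $\ell^1$-fields over $G$ of measures on $X$ --- the transpose of what is needed. Converting such an object into a \emph{continuous} field $X\to\mathrm{Prob}(G)$ requires a disintegration that destroys precisely the continuity in $x$ demanded by the definition of topological amenability; the type~\tm\ axiom does not rescue this. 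The paper's choice is different and is the key point: apply the degree-one vanishing to $E=C(X,\ell^1_0 G)^*$ (of type~\tm\ because $C(X,\ell^1_0 G)$ is of type~\tc), i.e.\ to the coefficient module $C(X,\ell^1_0 G)\bid$, and use the long exact sequence of the bidual of
$$0\lra C(X,\ell^1_0 G)\lra C(X,\ell^1 G)\lra C(X)\lra 0$$
to lift the invariant element $1_X$ to a $G$-invariant element of $C(X,\ell^1 G)\bid$ summing to $1_X$. Because the predual here is already $C(X,\ell^1 G)$, Goldstine plus Mazur/Hahn--Banach convex combinations and the Banach lattice operations then yield a net of genuine continuous maps $X\to\mathrm{Prob}(G)$ that is asymptotically invariant. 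In short: the test module must be chosen so that the convexification takes place inside $C(X,\ell^1 G)$ itself; yours is not, and that step of your plan would fail.
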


More can be said:

\medskip

a.~In ergodic theory, amenable actions (in Zimmer's sense) are characterised by the \emph{relative injectivity} of the corresponding modules of $L^\infty$-maps~\cite{Burger-Monod3,Monod}. We provide a similar statement for topological amenability. However, the notion of ``bounded measurable maps'' becomes problematic when $X$ is non-metrisable, as it will definitely be for applications to exactness where $X$ is the \v{C}ech--Stone space $\beta G$. We submit that a good analogue is provided in complete generality by the dual of the space $\sI(C(X), W)$ of \emph{integral operators} (valued in any Banach space $W$). Hence the statements~\eqref{pt:van:int} and~\eqref{pt:inj:int} below.

\medskip

b.~As in the Johnson--Ringrose criterion, we transit through a characterisation of amenability in terms of invariant means. The equivalence of this with the definition of amenability is routine, but we mention it for completeness,~\eqref{pt:mean:norm} and~\eqref{pt:mean:pos} below.

\begin{prop*}[Long version]\label{thm:amenability:long}
Let $G$ be a group acting on the compact space $X$. The following are equivalent.
\begin{enumerate}
\item The $G$-action on $X$ is topologically amenable.\label{pt:amen}
\item $\hb^n(G, C(X, V)\bid)=0$ for every Banach $G$-module $V$ and every $n\geq 1$.\label{pt:van:cont}
\item $\hb^n(G, \sI(C(X), W)^*)=0$ for every Banach $G$-module $W$ and every $n\geq 1$.\label{pt:van:int}
\item $\hb^n(G, E^*)=0$ for every $(G,X)$-module $E$ of type~\tm and every $n\geq 1$.\label{pt:van:M}
\item Any of the previous three points holds for $n=1$.\label{pt:van:1}
\item $C(X, V)\bid$ is relatively injective or every  Banach $G$-module $V$.\label{pt:inj:cont}
\item $\sI(C(X), W)^*$ is relatively injective or every  Banach $G$-module $W$.\label{pt:inj:int}
\item Every dual $(G,X)$-module of type~\tc is a relatively injective Banach $G$-module.\label{pt:inj:general}
\item There is a $G$-invariant element in $C(X, \ell^1 G)\bid$ summing to $1_X$.\label{pt:mean:norm}
\item There is a norm one positive $G$-invariant element in $C(X, \ell^1 G)\bid$ summing to $1_X$.\label{pt:mean:pos}
\end{enumerate}
\end{prop*}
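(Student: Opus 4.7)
The plan is to organise the ten equivalences around conditions (9) and (10), the Johnson--Ringrose-style invariant mean conditions, which are the most direct reformulation of topological amenability. The equivalences (1) $\Leftrightarrow$ (9) $\Leftrightarrow$ (10) follow almost directly from the definition: a defining net of positive $C(X)$-valued $\ell^1 G$-functions summing to $1_X$ with asymptotic $G$-invariance produces, via a weak-$*$ cluster point in $C(X, \ell^1 G)\bid$, a positive norm-one $G$-invariant mean summing to $1_X$; conversely, Mazur's lemma together with a convex-combination argument recovers a defining net from any such mean.

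The central implication is (10) $\Rightarrow$ (8), showing that every dual $(G,X)$-module $E = F^*$ of type~\tc is relatively injective. Given an admissible $G$-equivariant embedding $\iota\colon A \hookrightarrow B$ with bounded linear section $\sigma$ and a $G$-morphism $\alpha\colon A \to E$, I would define an extension $\bar\alpha\colon B \to E$ by integrating $g \mapsto \alpha(\sigma(g\inv b))$ against the invariant mean $\mu$, combining the $\ell^1 G$-averaging with the $C(X)$-module structure of $F$ through the bidual pairing. The type~\tc inequality supplies exactly the norm control needed for convergence; $G$-invariance of $\mu$ yields equivariance of $\bar\alpha$; and $\sigma\iota = \mathrm{id}$ gives $\bar\alpha \iota = \alpha$. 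Once (8) is established, the implications (8) $\Rightarrow$ (6), (7), (4) reduce to checking that $C(X,V)\bid$, $\sI(C(X), W)^*$, and the dual of any type~\tm module are themselves dual $(G,X)$-modules of type~\tc. Relative injectivity then yields vanishing of $\hb^n$ in every positive degree by the standard machinery of bounded cohomology, giving (6) $\Rightarrow$ (2), (7) $\Rightarrow$ (3), and (8) $\Rightarrow$ (4); the implications to (5) are tautological.

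To close the cycle I deduce (9) from (5). The function $g \mapsto g\cdot(1_X \otimes \delta_e) - 1_X \otimes \delta_e$ is a bounded $1$-cocycle with values in $C(X, \ell^1 G)\bid$, in fact in the submodule of elements summing to $0$. Applying (5) in the form of (2) with $V = \ell^1 G$ expresses this cocycle as a coboundary $g\eta - \eta$ for some $\eta \in C(X, \ell^1 G)\bid$; then $1_X \otimes \delta_e - \eta$ is $G$-invariant and sums to $1_X$, recovering (9).

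The main obstacle is the averaging construction in (10) $\Rightarrow$ (8). Because $X$ need not be metrisable---and typically is not in the applications to exactness, where $X = \beta G$---no pointwise or measurable interpretation of the averaging formula is available, and the argument must proceed entirely through the $C(X)$-module and bidual formalism. It is precisely to control the formal integrals produced by this construction, and to guarantee that the extension actually lands in $E$ with the correct norm, that the Kakutani-style type~\tc axiom was isolated; its use here is essential.
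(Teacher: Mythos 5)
Your overall architecture matches the paper's: a cycle through the invariant-mean conditions, the injectivity of dual type~\tc modules, the vanishing statements, and back. Two points deserve comment.

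First, a genuine (though easily repaired) gap in your step (5) $\Rightarrow$ (9). You apply the degree-one vanishing for $V=\ell^1 G$ to write the cocycle $g\mapsto g(1_X\otimes\delta_e)-1_X\otimes\delta_e$ as $g\eta-\eta$ with $\eta\in C(X,\ell^1 G)\bid$. But then $1_X\otimes\delta_e-\eta$ is invariant with sum $1_X-\Sigma\eta$, and nothing forces $\Sigma\eta=0$: the invariant element you produce could sum to anything, including $0$. You must instead apply the vanishing to $V=\ell^1_0 G$, i.e.\ to $C(X,\ell^1_0 G)\bid$, where your cocycle actually takes its values (as you yourself note); then $\eta$ sums to zero and the conclusion follows. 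This is exactly what the paper does, phrased as the long exact sequence attached to the bidual of $0\to C(X,\ell^1_0 G)\to C(X,\ell^1 G)\to C(X)\to 0$; your connecting-map computation is the same argument once the coefficient module is corrected.

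Second, the central implication. The paper proves (1) $\Rightarrow$ (8) directly: for each $\mu$ in the approximating net, written as a finite sum $\sum_g\mu_g\otimes\delta_g$, it defines $\beta_\mu(b)=\sum_g(\mu_g\otimes\delta_g)\,\alpha(\sigma(g\inv b))$, checks $\|\beta_\mu\|\le\|\mu\|\cdot\|\alpha\|$ and $\beta_\mu|_A=\alpha$ via the inequality (\tc), bounds $\beta_\mu(hb)-h\beta_\mu(b)$ by $\|\mu-h\mu\|\cdot\|\alpha\|\cdot\|b\|$, and then takes a weak-* accumulation point of the maps $\beta_\mu$ themselves in $\sL(B,E)\cong(B\ptens V)^*$, where all the required conditions are weak-* closed. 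You instead take the limit first, obtaining a mean in $C(X,\ell^1 G)\bid$, and propose to define a single extension by pairing this mean with $g\mapsto\alpha(\sigma(g\inv b))$ through the bidual. This can be made to work: the map $\mu\mapsto\beta_\mu(b)$ is bounded from $C(X,\ell^1 G)$ into the dual space $E=V^*$, hence admits a canonical weak-*-continuous extension to the bidual, and one must then verify that this extension interacts correctly with the summation condition (to get $\beta|_A=\alpha$) and with the $G$-action (to get equivariance). That Arens-extension bookkeeping is precisely what you flag as the ``main obstacle,'' and it is not actually carried out in your sketch; the paper's device of taking the accumulation point in $\sL(B,E)$ rather than in $C(X,\ell^1 G)\bid$ sidesteps it entirely and is worth adopting. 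Finally, in (9) $\Rightarrow$ (1) the Mazur argument only yields asymptotic invariance in norm; to obtain genuine probabilities one must still pass to positive parts using the Banach lattice structure of $C(X,\ell^1 G)$ and renormalise by $\Sigma\mu^j\ge 1_X$, a step your summary elides.
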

\noindent
The only really new fact is the implication \eqref{pt:amen}$\Rightarrow$\eqref{pt:inj:general}, which still holds for $G$ locally compact second countable with the definitions of~\cite[\S\,2]{Anantharaman02} and~\cite[\S\,4.1]{Monod}.

\medskip

The summation condition in~\eqref{pt:mean:norm} and~\eqref{pt:mean:pos} means that the canonical image in $C(X)\bid$ of the invariant element is the bidual of the constant function $1_X$. The proof shows that it suffices that the summation be any function that never vanishes. The norm condition in~\eqref{pt:mean:pos} is redundant.

\section{Context}
This section presents more context than is needed for the proofs; we decided to follow a natural path rather than the shortest one. In particular, the discussion of crossed products can be ignored by the reader in a hurry. For definiteness, let us agree that Banach spaces and algebras are real. The few facts on operators and tensor products that we will use can all be found in Grothendieck~\cite{Grothendieck52,Grothendieck55}.

\subsection{$G$-modules and cohomology}
We will use almost nothing from the theory of bounded cohomology; for background, we refer to~\cite{Johnson,Gromov,Ivanov,Monod}. A \emph{Banach $G$-module} is a Banach space with an isometric linear $G$-representation, viewed also as a (left) $\ell^1 G$-module. Johnson~\cite{Johnson} allows also uniformly bounded modules; although this would not change anything to our arguments, we keep the isometric setting because exact norms are important when applying bounded cohomology to the study of characteristic classes~\cite{Gromov}. The Banach $G$-module $E$ is \emph{relatively injective} if any $G$-morphism $\alpha: A\to E$ from a Banach $G$-submodule $A\se B$ can be extended to $\beta: B\to E$ with $\|\beta\|=\|\alpha\|$ provided that there is a norm one projection $B\to A$ (as Banach spaces). This is equivalent to the existence of a $G$-equivariant norm one left inverse to the inclusion of $E$ into $\ell^\infty(G, E)$ where the latter in endowed with the diagonal $G$-action. (We will not use this equivalence.)

\subsection{$C(X)$-modules}
Given a compact space $X$, we endow the algebra $C(X)$ with the uniform norm. A (Banach) $C(X)$-module is an algebraic module $E$ such that $\norms{\fhi u}_E\leq \norms{\fhi}\cdot \norms{u}_E$ for $\fhi\in C(X)$, $u\in E$. For instance, consider the module of (strongly) continuous maps $E=C(X,V)$ to any Banach space $V$, with norm $\norms{\eta}_E=\sup_{x\in X}\norms{\eta(x)}_V$. If $V$ is also a Banach $G$-module, then $V$ becomes a Banach $G$-module for the diagonal action. Moreover, it is a $(G,X)$-module as defined in the introduction. Recalling our definition of the types~\tc and~\tm, one checks that the $C(X)$-module $C(X, E)$ is of type~\tc. More generally, if $Y$ is a compact space with a continuous map $Y\to X$, we obtain naturally a $C(X)$-module structure on $C(Y, E)$ and it is of type~\tc.

\begin{prop}
Let $E$ be a $C(X)$-module.
\begin{enumerate}
\item $E^*$ is of type~\tc if and only if $E$ is of type~\tm.\label{pt:dual:C}
\item $E^*$ is of type~\tm if and only if $E$ is of type~\tc.\label{pt:dual:M}
\item Both conditions \tc and \tm descend to submodules.\label{pt:CM:sub}
\item Each inequality implies the corresponding inequality for arbitrary $\fhi_i\in C(X)$ upon replacing $\fhi_i$ by $|\fhi_i|$ in the right hand side.\label{pt:CM:positive}
\end{enumerate}
\end{prop}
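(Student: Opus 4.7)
The four parts fall into three tiers. I would dispatch (iii) in a sentence: a closed $C(X)$-submodule carries the ambient norm, and both (M) and (C) involve only norms of vectors and of elements of $C(X)$, so the inequalities transfer verbatim.

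For (iv) I would use the positive/negative part splitting $\fhi_i = \fhi_i^+ - \fhi_i^-$ and feed the already-assumed non-negative inequality the doubled family. In the (M) case, applying it to the $2n$ non-negative functions $\fhi_1^+, \fhi_1^-, \ldots, \fhi_n^+, \fhi_n^-$, combined with the triangle inequality $\|\fhi_i u\| \le \|\fhi_i^+ u\| + \|\fhi_i^- u\|$, yields the conclusion because $\sum_i(\fhi_i^+ + \fhi_i^-) = \sum_i |\fhi_i|$. For (C) the same doubling works with a simultaneous doubling of the vectors to $u_1, -u_1, \ldots, u_n, -u_n$, so that $\sum_i \fhi_i u_i = \sum_i \fhi_i^+ u_i + \sum_i \fhi_i^-(-u_i)$ is a non-negative combination in the scope of the hypothesis, and the maximum over the doubled family is still $\max_i \|u_i\|$.

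Parts (i) and (ii) are two symmetric Hahn--Banach duality pairs. The easy direction in each case tests one side against the other. Assuming (M) on $E$, for $\mu_i \in E^*$ and $\fhi_i \ge 0$, evaluating $\sum_i \fhi_i \mu_i$ on a unit vector $u$ yields the chain $|\sum_i \mu_i(\fhi_i u)| \le (\max_i \|\mu_i\|)\sum_i \|\fhi_i u\| \le \|\sum_i \fhi_i\| \cdot \max_i \|\mu_i\|$, which is exactly (C) on $E^*$; the computation for ``(C) on $E$ implies (M) on $E^*$'' is the same recipe with $\mu$ fixed and one $u_i$ per term. For the converses I would invoke Hahn--Banach to pick near-norm-attaining elements: to deduce (M) on $E$ from (C) on $E^*$, pick $\mu_i \in E^*$ of norm one with $\mu_i(\fhi_i u) \ge \|\fhi_i u\| - \vareps/n$ and apply (C) on $E^*$ to $\sum_i \fhi_i \mu_i$; to deduce (C) on $E$ from (M) on $E^*$, pick a single $\mu$ of norm one nearly attaining $\|\sum_i \fhi_i u_i\|$, rewrite the pairing as $\sum_i (\fhi_i \mu)(u_i)$, and apply (M) on $E^*$ to $\mu$.

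I do not anticipate a genuine obstacle. The only two mild points requiring attention are that the $C(X)$-module structure on $E^*$ is well-posed because $C(X)$ is commutative (via $(\fhi \mu)(u) := \mu(\fhi u)$, which automatically preserves the norm bound), and the standard $\vareps/n$ bookkeeping in the two converse directions, which vanishes upon letting $\vareps \to 0$.
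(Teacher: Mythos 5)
Your proof is correct, and for the forward directions of (i)--(ii), for (iii), and for (iv) it coincides with the paper's (the paper likewise dispatches (iii) as obvious, gets the ``easy'' dual implications directly from the definition of the dual norm, and proves (iv) by splitting each $\fhi_i$ into positive and negative parts --- your doubling of the families, with the sign flip $u_i\mapsto -u_i$ in the \tc case, is exactly the bookkeeping the paper leaves implicit). Where you genuinely diverge is in the converse directions of (i)--(ii): the paper embeds $E$ isometrically into $E\bid$, applies the already-established easy direction twice to conclude that $E\bid$ has the dual-of-dual type, and then invokes (iii) to descend back to $E$ as a submodule; you instead argue directly on $E$ by Hahn--Banach selection of near-norm-attaining functionals $\mu_i$ (for \tm) or a single $\mu$ (for \tc) and an $\vareps$-limit. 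Both are sound. The paper's route is shorter on the page and reuses (iii) to avoid any explicit $\vareps$ management; yours is more self-contained and makes transparent exactly which Hahn--Banach instance is being used, at the cost of the $\vareps/n$ bookkeeping. Your remark that commutativity of $C(X)$ is what makes $(\fhi\mu)(u):=\mu(\fhi u)$ a bona fide module structure on $E^*$ is a worthwhile point that the paper passes over in silence.
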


\begin{proof}
The definition of the dual norm implies readily that $E^*$ is of type~\tc (respectively~\tm) if $E$ is of type~\tm (resp.~\tc). The converse statements follow by duality after embedding $E$ into $E\bid$ and using point~\eqref{pt:CM:sub}, which is obvious. Point~\eqref{pt:CM:positive} follows from decomposing each $\fhi_i$ into positive and negative parts.
\end{proof}

\begin{cor}\label{cor:sI:M}
For any Banach space $W$, the $C(X)$-module $\sI(C(X), W)$ is of type~\tm.
\end{cor}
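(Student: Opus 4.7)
The plan is to use Grothendieck's identification of $\sI(C(X), W)$ as an isometric subspace of $(C(X) \itens W^*)^*$, via the pairing $\ip{T,\, f\atens\psi} = \psi(T(f))$, with the integral norm equal to the dual norm. The injective tensor product $C(X) \itens W^*$ sits isometrically inside $C(X, W^*)$ equipped with the sup norm, so every element $\xi$ in its unit ball is a $W^*$-valued function satisfying $\norms{\xi(x)}_{W^*}\leq 1$ at every $x\in X$. A one-line computation shows that the $C(X)$-action $(\fhi T)(f) = T(\fhi f)$ on $\sI(C(X), W)$ dualizes to pointwise multiplication $(\fhi\xi)(x) = \fhi(x)\xi(x)$: indeed $\ip{\fhi T,\, f\atens\psi} = \psi(T(\fhi f)) = \ip{T,\, (\fhi f)\atens\psi}$.

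With this setup in hand, inequality~(M) drops out of an $\vareps$-extraction in the predual. Fix $T \in \sI(C(X), W)$ and $\fhi_1,\ldots,\fhi_n \geq 0$ in $C(X)$. For $\vareps > 0$, I choose for each $i$ an element $\xi_i$ in the unit ball of $C(X) \itens W^*$ with
$$\norms{\fhi_i T}_\sI - \vareps/n \ \leq\ \ip{\fhi_i T,\, \xi_i} \ =\ \ip{T,\, \fhi_i \xi_i},$$
replacing $\xi_i$ by $-\xi_i$ if needed so the pairing is nonnegative. Summing over $i$ and invoking that $\norms{T}_\sI$ is the dual norm gives
$$\sum_i \norms{\fhi_i T}_\sI - \vareps \ \leq\ \ip{T,\, \textstyle\sum_i \fhi_i \xi_i} \ \leq\ \norms{T}_\sI \cdot \bigl\| \textstyle\sum_i \fhi_i \xi_i\bigr\|_\infty.$$
The closing step is the positivity bound: since $\fhi_i \geq 0$ and $\norms{\xi_i(x)}_{W^*}\leq 1$ for every $x$, one has $\bigl\|\sum_i \fhi_i(x)\xi_i(x)\bigr\|_{W^*} \leq \sum_i \fhi_i(x) \leq \norms{\sum_i \fhi_i}$, and letting $\vareps \to 0$ yields~(M).

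The only real content is the Grothendieck identifications in the first paragraph; once those are in place, the estimate itself is a one-line application of pointwise positivity of the $\fhi_i$. I do not expect a genuine obstacle, only the bookkeeping of verifying that the integral norm on $\sI(C(X), W)$ really coincides with the functional norm on $(C(X) \itens W^*)^*$ and that the $C(X)$-structure dualizes as claimed — both are essentially by the definition of integral operators and of the injective tensor norm, but they are the sole nontrivial inputs.
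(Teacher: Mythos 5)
Your argument is correct and follows essentially the same route as the paper: both rest on Grothendieck's realization of $\sI(C(X), W)$ as an isometric $C(X)$-submodule of $(C(X)\itens W^*)^* \cong C(X, W^*)^*$. The only difference is that the paper then simply cites its Proposition (the dual of a type~\tc module is of type~\tm, and type~\tm descends to submodules), whereas you unwind that duality statement into the explicit $\vareps$-extraction — which is exactly the proof of those two facts specialized to this situation.
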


\begin{rem}\label{rem:sI:cont}
If $W$ happens to be a dual $W=V^*$, then  $\sI(C(X), W)\cong C(X,V)^*$; but in general $\sI(C(X), W)$ is not dual.
\end{rem}

\begin{proof}[Proof of Corollary~\ref{cor:sI:M}]
Recall that $\sI(C(X), W)$ sits isometrically as a closed subspace in the integral bilinear forms $\sBI(C(X)\times W^*)$. The latter is the dual of the injective tensor product $C(X)\itens W^*$ which identifies with $C(X, W^*)$. All this is compatible with the $C(X)$-structures. In other words, we realized $\sI(C(X), W)$ as a submodule of $C(X, W^*)^*$, which is of type~\tm.
\end{proof}

\subsection{The algebra $C(X, \ell^1 G)$}

An important particular case is $\sA:=C(X, \ell^1 G)$. There are canonical identifications
$$\sA:=C(X, \ell^1 G) \ \cong\ C(X)\itens \ell^1(G) \ \cong\ \uc{G}{C(X)}$$
where $\uc{G}{-}$ denotes unconditionally summable sequences indexed by $G$. Notice in particular that the \emph{finite} sums of the form $\sum_{g} \fhi_g\otimes \delta_g$ with $\fhi_g\in C(X)$ are dense in $\sA$. Moreover, the $\sA$-norm of such an element is
$$\Big\| \sum_{g} \fhi_g\otimes \delta_g \Big\|_{\sA} \ =\ \Big\| \sum_{g} |\fhi_g| \Big\|_{C(X)}$$
The $(G, X)$-structure turns $\sA$ into a Banach algebra that we call the \emph{Banach crossed product} of $G$ and $X$, not to be confused with the C*-algebraic crossed product. Explicitly, the product is $(\fhi\otimes\delta_g)(\psi\otimes\delta_h) = \fhi g\psi\otimes\delta_{gh}$ on elementary tensors with $g,h\in G$ and $\fhi, \psi\in C(X)$. The natural algebraic involution $\fhi\otimes\delta_g\mapsto g\inv\fhi\otimes \delta_{g\inv}$ does \emph{not} in general extend to $\sA$ (Example~\ref{ex:invol}), and there is indeed a fundamental difference between right and left $\sA$-modules, as we shall see. 

\begin{prop}\label{prop:gen:MC}
Let $G$ be a group acting on the compact space $X$ and let $E$ be a $(G, X)$-module. Let $\sA=C(X, \ell^1 G)$.
\begin{enumerate}
\item If $E$ is of type~\tc, then it is naturally a left $\sA$-module.
\item If $E$ is of type~\tm, then it is naturally a right $\sA$-module.
\end{enumerate}
\end{prop}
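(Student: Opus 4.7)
The plan is to define the $\sA$-module structure first on the dense subspace of finite sums $a=\sum_{g}\fhi_g\otimes\delta_g$ via the natural formula suggested by elementary tensors, then verify boundedness using the appropriate type condition (combined with Proposition~(iv) to absorb the signs of the $\fhi_g$), and finally extend by density and continuity to all of $\sA$. Algebraic associativity on elementary tensors will be checked directly from the $(G,X)$-compatibility $g(\fhi v)=(g\fhi)(gv)$; by bilinearity this transfers to finite sums.

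For case (i), with $E$ of type~\tc, I set
\[ a\cdot u \ :=\ \sum_{g}\fhi_g\,(gu),\qquad a=\sum_g \fhi_g\otimes\delta_g. \]
On elementary tensors one computes
\[ (\fhi\otimes\delta_g)\cdot\bigl((\psi\otimes\delta_h)\cdot u\bigr) \ =\ \fhi\cdot g\bigl(\psi(hu)\bigr) \ =\ \fhi\,(g\psi)\cdot(gh\,u) \ =\ \bigl(\fhi(g\psi)\otimes\delta_{gh}\bigr)\cdot u, \]
which matches the product in $\sA$. For boundedness, set $u_g:=gu$, so that $\|u_g\|=\|u\|$ by isometry; applying type~\tc combined with Proposition~(iv) to the (possibly signed) $\fhi_g$ gives
\[ \|a\cdot u\| \ =\ \Bigl\|\sum_{g}\fhi_g u_g\Bigr\| \ \leq\ \Bigl\|\sum_g |\fhi_g|\Bigr\|_{C(X)}\max_g\|u_g\| \ =\ \|a\|_{\sA}\,\|u\|. \]

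For case (ii), with $E$ of type~\tm, the formula is forced by the product in $\sA$; the naive transpose of the left action fails because the involution does not extend (cf.~Example~\ref{ex:invol}). I set
\[ u\cdot a \ :=\ \sum_{g} g\inv(\fhi_g u),\qquad a=\sum_g \fhi_g\otimes\delta_g. \]
Using $(G,X)$-compatibility and the commutativity of $C(X)$, both sides of $u\cdot(a_1 a_2)=(u\cdot a_1)\cdot a_2$ for $a_1=\fhi\otimes\delta_g$, $a_2=\psi\otimes\delta_h$ collapse to $((gh)\inv\fhi)(h\inv\psi)((gh)\inv u)$ after a short manipulation. Boundedness comes from the triangle inequality, isometry of $g\inv$, and type~\tm with Proposition~(iv):
\[ \|u\cdot a\| \ \leq\ \sum_g\|g\inv(\fhi_g u)\| \ =\ \sum_g \|\fhi_g u\| \ \leq\ \Bigl\|\sum_g|\fhi_g|\Bigr\|_{C(X)}\|u\| \ =\ \|u\|\,\|a\|_{\sA}. \]

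In both cases the action extends by continuity to a contractive $\sA$-module structure on $E$. The only genuine pitfall is the right action: one must resist the naive guess $u\cdot(\fhi\otimes\delta_g)=\fhi(gu)$ and instead track the (non-extendable) involution, i.e.\ use $g\inv(\fhi u)$. Once that formula is chosen, both the algebraic verification and the norm estimate are a straightforward unfolding of the $(G,X)$-compatibility and the type hypothesis.
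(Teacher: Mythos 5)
Your proposal is correct and follows essentially the same route as the paper: define the action on finite sums by $a\cdot u=\sum_g\fhi_g(gu)$ in the type~\tc case and $u\cdot a=\sum_g g\inv(\fhi_g u)$ in the type~\tm case, bound it by $\|a\|_{\sA}\|u\|$ using the type inequality (with the reduction to $|\fhi_g|$), and extend by density. You merely spell out the associativity check and the type~\tm estimate that the paper leaves to the reader.
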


\begin{example}
The $(G, X)$-module $C(X)\ptens \ell^1 G$, where $\ptens$ is the projective tensor product, is not an $\sA$-module unless either $G$ or $X$ is finite. This is a special case of the Dvoretzky--Rogers Theorem~\cite{Dvoretzky-Rogers} since $C(X)\ptens \ell^1 G \cong \ell^1(G, C(X))$. In particular it is not of type~\tc; in addition, it is not of type~\tm either unless $X$ is a point.
\end{example}

\begin{proof}[Proof of Proposition~\ref{prop:gen:MC}]
The condition~(\tc) precisely shows that for finite sums $\alpha=\sum_g \alpha_g \otimes \delta_g$ and $u\in E$ we have
$$\norms{\alpha u}_E \ =\ \Big\| \sum_g \alpha_g (g u) \Big\|\ \leq\ \Big\| \sum_g |\alpha_g|\Big\|_{C(X)}\cdot \max_g\|g u\|_E\ =\ \|\alpha\|_{\sA}\cdot \|u\|_E$$
and thus the left multiplication extends to $\sA$. If $E$ is of type~\tm, we define
$$u \alpha:= \sum_g\big( (g\inv\alpha_g)\otimes\delta_{g\inv}\big) u\ =\  \sum_g g\inv(\alpha_g u),$$
which is a left module structure over finite sums. We then carry out a similar computation using the inequality~(\tm).
\end{proof}

\section{Characterisations of amenability}
The action of a group $G$ on a compact space $X$ is called \emph{topologically amenable} if there is a net $\{\mu^j\}_{j\in J}$ in $C(X, \ell^1 G)$ such that every $\mu^j(x)$ is a probability on $G$ and
$$\lim_{j\in J}\,  \big\| g\mu^j - \mu^j \big\|_{C(X, \ell^1 G)}\ =\ 0 \kern5mm(\forall\, g\in G).$$
Notice that if $G$ is countable, the net can be replaced by a sequence; this does not require $X$ to be metrisable and hence applies for instance to the \v{C}ech--Stone compactification of a countable group. Notice also that the continuity of $\mu^j:X\to \ell^1 G$ can be replaced by weak-* continuity since both topologies coincide on probabilities. By compactness of $X$, one can assume that the support of $\mu^j(x)$ is in a finite set depending only on $j$. Finally, we observe that, as expected, amenability is ``approximate properness'' with $\mu^j$ an approximate \emph{Bruhat function}.

Background references are~\cite{Anantharaman02,Brown-Ozawa}; we recall that the amenability of the action is equivalent to the nuclearity of the C*-algebraic (reduced) crossed product, and thus in turn to its amenability~\cite{Connes78,Haagerup83}.

\subsection{Proofs}
In order to take advantage of some simple implications, the proof will not follow a single cycle of implications. The only really new fact is the following implication, which still holds for $G$ locally compact second countable with the definitions of~\cite[\S\,2]{Anantharaman02} and~\cite[\S\,4.1]{Monod}.

\medskip

\eqref{pt:amen}$\Rightarrow$\eqref{pt:inj:general}.
Let $E$ be a $(G,X)$-module of type~\tc and consider the extension problem
$$\xymatrix{
A \ar@<-0.75ex>@{^{(}->}[rr] \ar[dr]_{\alpha}&& B \ar@/_1pc/[ll]_{\sigma}\ar@{.>}[dl]^{\exists\beta} \\
&E & \\
}$$
where $\sigma$ is a norm one projection (which need not respect the module structure). Let $\mu\in C(G,\ell^1 G)$ be a finite sum $\sum_g \mu_g\otimes\delta_g$. We then define $\beta_\mu$ on each $b\in B$ by
$$\beta_\mu(b):= \sum_g (\mu_g\otimes\delta_g)\, \alpha(\sigma(g\inv b)).$$
In particular, $\beta_\mu$ is linear and $\beta|_A=\alpha$ if $\sum_g \mu_g = 1_X$. Moreover, the condition~\tc implies $\|\beta_\mu\|\leq \|\mu\|\cdot\|\alpha\|$. If now $E$ is a dual $E=V^*$, then those conditions are closed in the weak-* topology on $\sL(A, E)\cong (A\ptens V)^*$. Therefore, taking $\beta$ to be a weak-* accumulation point of the family $\beta_\mu$ where $\mu$ ranges over a net as in the definition of topological amenability, it remains only to show that $\beta$ is $G$-equivariant. For $h\in G$ and $b\in B$, we compute
$$\beta_\mu(h b) - h\beta_\mu(b)\ =\ \sum_g \Big(\mu_g - h\mu_{h\inv g}\Big)\, g\alpha(\sigma(g\inv h b))$$
which by condition~(\tc) is bounded in $E$-norm by
$$\big\|\mu - h\mu\big\|_{C(X, \ell^1 G)} \cdot \|\alpha\| \cdot \|b\|_B.$$
It follows indeed that $\beta$ is equivariant.

\medskip

\eqref{pt:inj:general}$\Rightarrow$\eqref{pt:van:M} and \eqref{pt:inj:cont}$\Rightarrow$\eqref{pt:van:cont}.
Both are due to the fact that cohomology with values in relatively injective modules vanishes in every positive degree (e.g. because the module is a trivial resolution of itself). For~\eqref{pt:van:M} we recall that $E^*$ is of type~\tc.

\medskip

\eqref{pt:inj:general}$\Rightarrow$\eqref{pt:inj:int}$\Rightarrow$\eqref{pt:inj:cont} and \eqref{pt:van:M}$\Rightarrow$\eqref{pt:van:int}$\Rightarrow$\eqref{pt:van:cont}$\Rightarrow$\eqref{pt:van:1}
are immediate in view of Corollary~\ref{cor:sI:M} and Remark~\ref{rem:sI:cont}.

\medskip

\eqref{pt:van:1}$\Rightarrow$\eqref{pt:mean:norm}.
Consider the exact sequence of Banach $G$-modules
$$0\lra C(X, \ell^1_0 G) \lra C(X, \ell^1 G) \lra C(X) \lra 0$$
where $\ell^1_0 G$ is the kernel of the summation map (as a Banach space sequence it is even split exact). Apply the bounded cohomology long exact sequence to the bidual sequence and use the vanishing assumption for $C(X, \ell^1_0 G)\bid$. This yields a short exact sequence of $G$-invariants
$$0 \lra \big(C(X, \ell^1_0 G)\bid\big)^G \lra  \big(C(X, \ell^1 G)\bid\big)^G \lra \big(C(X)\bid\big)^G \lra 0$$
and~\eqref{pt:mean:norm} follows.

\medskip

\eqref{pt:mean:norm}$\Rightarrow$\eqref{pt:amen}.
Let $\{\mu^j\}_{j\in J}$ be a net in $C(X, \ell^1 G)$ converging to an element as in~\eqref{pt:mean:norm}; we can assume that each $\mu^j$ ranges in the functions in $\ell^1 G$ that sum up to one. Since $g\mu^j - \mu^j$ converges weakly to zero for all $g\in G$, a standard Hahn--Banach argument shows that, upon passing to a net of finite convex combinations, we can assume that $\| g\mu^j - \mu^j\|$ converges to zero for all $g$. Recall that $C(X, \ell^1 G)$ is a Banach lattice and that therefore all lattice operations are (norm-) continuous. Moreover, they are $G$-equivariant. Recall further that canonical embeddings into biduals are compatible with all Banach lattice structures. In conclusion, we can further assume that all $\mu^j$ are non-negative, retaining all properties except that now the image $\Sigma\mu^j:=\sum_G\mu^j$ of $\mu^j$ in $C(X)$ is only \emph{bounded below} by $1_X$. However, the (equivariant) renormalization $\mu^j\mapsto \mu^j/\Sigma\mu^j$ is $2$-Lipschitz on maps with $\Sigma\mu^j\geq 1_X$ and therefore we finally obtain a net as required.

We could replace $1_X$ in the assumption by any never vanishing $f\in C(X)$; the Lipschitz constant would then be $2/\min_x |f(x)|$.

\medskip

\eqref{pt:amen}$\Rightarrow$\eqref{pt:mean:pos}
is obtained by choosing a weak-* accumulation point and keeping in mind the properties of Banach lattices recalled above. Since \eqref{pt:mean:pos}$\Rightarrow$\eqref{pt:mean:norm} is tautological, this completes the proof.

\section{Exact groups and various remarks}
The definition of exact groups goes back to~\cite{Kirchberg-Wassermann}; it is equivalent to the statement that the reduced C*-algebra of the group is exact for spatial tensor products. Ozawa~\cite{Ozawa_exact} and Anantharaman-Delaroche~\cite{Anantharaman02} proved that the group $G$ is exact if and only if its action on the \v{C}ech--Stone compactification $\beta G$ is amenable. Higson--Roe~\cite{Higson-Roe00} established that the latter condition is equivalent to Yu's Property~A. Exactness has deep implications for the group but at the same time it is notoriously difficult to produce any non-exact group; Gromov succeeded in~\cite{GromovRANDOM}.

\medskip

In view of the equivalence with amenability on $\beta G$, we have characterisations of exact groups. In this special case, $C(\beta G)\cong \ell^\infty G$, allowing some additional identifications. For instance, $(G, \beta G)$-modules are particularly concrete and can be thought of as modules over the involutive Banach--Hopf algebra $\ell^1 G$.

\begin{example}
Let $V$ be a Banach $G$-module. Any $G$-invariant $\ell^\infty G$-submodule of $\ell^\infty(G, V)$ is a $(G, \beta G)$-module of type~\tc. It is generally not of the form $C(\beta G, V)$.
\end{example}

This example provides a link to the work of Douglas--Nowak~\cite{Douglas-NowakVAN}. Another article investigating cohomological characterisations of exactness is~\cite{Brodzki-Niblo-Wright}.

\medskip

Another obvious example of $(G, \beta G)$-module is provided by the various $\ell^p G$.

\begin{example}\label{ex:GX}
Let $G$ be a non-Abelian free group. Then the $G$-action on $X:=\beta G$ is amenable. On the other hand, $\hb^2(G, \ell^1 G)$ is non-trivial, even though $\ell^1 G$ is a dual $(G, X)$-module. In fact, it is of type~\tm and is the dual of the $(G, X)$-module $c_0(G)$ of type~\tc. ($\hb^2(G, \ell^p G)$ is non-trivial for all $p<\infty$ using the argument of~\cite{Monod-ShalomCRAS}.)
\end{example}

The algebra
$$\sA:=C(\beta G, \ell^1 G)\ \cong\ \ell^\infty G \itens \ell^1 G\ \cong\ \uc{G}{\ell^\infty G},$$
which coincides with the algebra $\ell_u G$ of~\cite{Douglas-NowakVAN}, also admits additional realizations since now any element can be viewed as a kernel on $G\times G$. We find thus:
$$\sA\ \cong\ \sC(\ell ^1 G)\ \cong\ \sL_{\text{w*/w}}(\ell^\infty G),$$
where $\sC$ denotes compact operators and $\sL_{\text{w*/w}}$ the weak-*-to-weak continuous operators (which are necessarily compact)
. (The product, meanwhile, remains $G$-twisted).

\begin{example}\label{ex:invol}
The involution familiar from C*-crossed products is not $\sA$-bounded: if $S\se G$ is a finite set, the diagonal $\sum_{g\in S}\delta_g\otimes\delta_g$ has unit norm but its image $\delta_e\otimes\sum_{g\in S}\delta_{g\inv}$ has norm $|S|$.
\end{example}


\bibliographystyle{../BIB/amsalpha}
\bibliography{../BIB/ma_bib}
\end{document}